\newcommand{\Magma}{{\tt Magma}}
\newcommand{\F}{\mathbb{F}}
\newcommand{\Q}{\mathbb{Q}}
\newcommand{\Z}{\mathbb{Z}}
\newcommand{\rhobar}{{\overline{\rho}}}
\newcommand{\calO}{\mathcal{O}}
\newcommand{\rr}{\rho}
\newcommand{\rank}{\operatorname{rank}}
\newcommand{\ff}{\mathfrak{f}}
\newcommand{\fp}{\mathfrak{p}}
\newcommand{\fq}{\mathfrak{q}}
\DeclareMathOperator{\Gal}{Gal}
\DeclareMathOperator{\Norm}{Norm}
\DeclareMathOperator{\Jac}{Jac}
\DeclareMathOperator{\un}{un}
\newcommand{\vv}{\upsilon}
\numberwithin{equation}{section}
\newtheorem{theorem}{Theorem}[section]
\newtheorem{proposition}[theorem]{Proposition}
\theoremstyle{definition}
\theoremstyle{remark}
\newtheorem{remark}[theorem]{Remark}
\definecolor{darkgreen}{rgb}{0,0.5,0}
\begin{document}

\title{On the generalized Fermat equation $x^{13} + y^{13} = z^n$}

\author{Alex J. Best}
\address{Alex J. Best\\
UK}
\email{alex.j.best@gmail.com}

\author{Sander R. Dahmen}
\address{Sander R. Dahmen\\
Department of Mathematics\\
VU Amsterdam\\
De Boelelaan 1111\\
1081 HV Amsterdam\\
The Netherlands}
\email{s.r.dahmen@vu.nl}

\author{Nuno Freitas}
\address{Nuno Freitas\\
Instituto de Ciencias Matem\'aticas, CSIC\\
Calle Nicol\'as Cabrera\\
13--15, 28049 Madrid, Spain}
\email{nuno.freitas@icmat.es}

\date{\today}
\subjclass[2020]{Primary 11D41; Secondary 11F80, 11G05, 11G10, 11G30}
\keywords{Fermat equations, modularity, Galois representations, Chabauty}
\thanks{The first two named authors were supported by NWO Vidi grant 639.032.613.\\ We thank the Max Planck Institute for Mathematics in Bonn for its hospitality on several occasions, which enabled the last two named authors to collaborate on this paper}

\begin{abstract}
Let $n \in \Z_{\geq 2}$. We study the generalized Fermat equation
\[x^{13}+y^{13}=z^n, \quad x,y,z \in \Z, \quad \gcd(x,y,z)=1.\]
Using a combination of techniques, including the modular method, classical descent, unit sieves, and Chabauty and Mordell--Weil sieve methods over number fields, we show that for $n=5$ all its solutions $(a,b,c)$ are trivial, i.e. satisfy $abc=0$.
Under the assumption of GRH, we also show that for $n=7$ there are only trivial solutions.
Furthermore, we provide partial results towards solving the equation for general $n \in \Z_{\geq 2}$, in particular that any solution $(a,b,c)$ with $13\mid c$ is trivial.
\end{abstract}

\maketitle

\section{Introduction}

Let $p,q,r \in \Z_{\geq 2}$ and consider the generalized Fermat equation
\begin{equation}\label{eqn:GFE}
x^p+y^q=z^r, \quad x,y,z \in \Z ,\quad \gcd(x,y,z)=1.
\end{equation}

See e.g.~\cite{BCDY2015} for an overview, or~\cite{RatcliffeGrechuk2015} for a list of solved cases.

In this work, we study~\eqref{eqn:GFE} with exponent triple $(13,13,p)$, i.e.
\begin{equation}\label{GFE:13-13-p}
 x^{13} + y^{13} = z^p, \quad x,y,z \in \Z ,\quad \gcd(x,y,z)=1
 \end{equation}
(where still $p \in \Z_{\geq 2}$).
It suffices to restrict to prime exponent $p$.
The equation has been solved for $p=2$ in~\cite{BennetSkinner} and $p=3$ in~\cite{BenVatYaz}, and prior to this work for no other prime exponent $p$.
Our main focus will be on the cases $p=5$ and $p=7$, though several partial results will be given for other prime exponents $p$ as well. We will also treat the equation for all $p$ under a natural divisibility condition.
Our main result is as follows.
\begin{theorem}
 \label{thm:13-13-5or7}
 Let $p \in \{5,7\}$ and assume GRH if $p=7$.
Then the only solutions to the generalized Fermat equation
 \begin{equation}\label{GFE:13-13-5or7}
  x^{13} + y^{13} = z^p, \quad x,y,z \in \Z ,\quad \gcd(x,y,z)=1,
 \end{equation}
are the trivial solutions $(\pm 1,\mp 1,0), (\pm 1, 0, \pm 1)$, and $(0, \pm 1, \pm 1)$.
\end{theorem}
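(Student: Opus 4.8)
It suffices to treat prime exponents $p$, so fix $p\in\{5,7\}$. The divisibility result stated above shows that any solution with $13\mid z$ is trivial, so I would assume $13\nmid z$ and that $(x,y,z)$ is nontrivial, i.e.\ $xyz\neq 0$. Putting $\phi(x,y)\colonequals(x^{13}+y^{13})/(x+y)\in\Z[x,y]$ one has $\gcd(x+y,\phi(x,y))\mid 13$, so from $13\nmid z$ the integers $x+y$ and $\phi(x,y)$ are coprime, and since $p$ is odd
\begin{equation*}
 x+y=z_1^{p},\qquad \phi(x,y)=z_2^{p},\qquad z=z_1 z_2,\qquad \gcd(z_1,z_2)=1 .
\end{equation*}
The goal becomes to show that this system has no solution with $xy(x+y)\neq 0$.

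\textbf{Modular method over $K=\Q(\zeta_{13})^{+}$.} To a putative solution I would attach a Frey elliptic curve $E/K$, where $K$ is the totally real subfield of $\Q(\zeta_{13})$ of degree $6$, following the standard construction of Frey curves for Fermat equations of signature $(13,13,p)$ and exploiting that $\phi$ factors over $K$ into conjugate positive-definite quadratic forms. After verifying modularity of $E$ — available for elliptic curves over totally real fields in the relevant cases, with a brief ad hoc treatment of the finitely many potential exceptions — and irreducibility of $\rhobar_{E,p}$, level lowering produces a Hilbert newform $f$ over $K$ of parallel weight $2$ with $\rhobar_{E,p}\cong\rhobar_{f,\fp}$ for some prime $\fp\mid p$ of the coefficient field of $f$, whose level divides an explicit ideal of $\calO_K$ supported at $13$ and $2$, independent of the solution. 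I would then enumerate the finitely many such $f$ and discard all but a short list by comparing $a_\fq(f)$ with the admissible values of $a_\fq(E)$ modulo $\fp$ over several small primes $\fq$ of $K$; the forms that should survive are exactly those attached to the Frey curves of the trivial or parasitic solutions. For $p=7$ this enumeration and elimination, together with the auxiliary ray-class-group computations it requires, is on the edge of feasibility, which is where GRH enters.

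\textbf{Descent, unit sieve, and Chabauty over number fields.} For the surviving forms, and more robustly as a self-contained attack on the remaining cases, I would analyse $\phi(x,y)=z_2^{p}$ directly over $\calO_K$. The form $\phi$ splits over $K$ as $\prod_{j=1}^{6}q_j(x,y)$ with $q_j(x,y)=x^2+\beta_j xy+y^2$, where $\beta_j=\zeta_{13}^{j}+\zeta_{13}^{-j}$ runs over the conjugates of $\zeta_{13}+\zeta_{13}^{-1}$; each $q_j$ is positive definite since $|\beta_j|<2$. As $\gcd(x,y)=1$ and $13\nmid z_2$, the ideals $(q_j(x,y))$ are pairwise coprime, so each is the $p$-th power of an $\calO_K$-ideal, and since $K$ has class number $1$ one gets $q_j(x,y)=\varepsilon_j w_j^{p}$ with $\varepsilon_j$ in a fixed finite set of representatives of $\calO_K^{\times}/(\calO_K^{\times})^{p}$ and $w_j\in\calO_K\setminus\{0\}$. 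Because each $q_j$ has the shape $(x^2+y^2)+\beta_j xy$, any three of them satisfy a nonzero identical $K$-linear relation $\lambda_1 q_1+\lambda_2 q_2+\lambda_3 q_3=0$, so every solution produces a $K$-point on a twisted Fermat curve
\begin{equation*}
 C_{\varepsilon}\colon\quad \lambda_1\varepsilon_1 X^{p}+\lambda_2\varepsilon_2 Y^{p}+\lambda_3\varepsilon_3 Z^{p}=0
\end{equation*}
of genus $(p-1)(p-2)/2$ (namely $6$ for $p=5$ and $15$ for $p=7$), with $\varepsilon=(\varepsilon_1,\varepsilon_2,\varepsilon_3)$ ranging over an explicit finite set. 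A unit sieve — reducing the relations $q_j(x,y)=\varepsilon_j w_j^{p}$ modulo several auxiliary primes of $K$ — discards most tuples $\varepsilon$. For each remaining curve $C_{\varepsilon}$ I would compute a finite-index subgroup of its Mordell--Weil group over $K$ (or over a suitable small extension), pass to well-chosen quotient curves (whose Jacobians split off factors of smaller rank, using the automorphisms of $C_{\varepsilon}$) on which the Chabauty rank condition holds, and combine explicit Chabauty over number fields with a Mordell--Weil sieve to show that the only points are those pulling back to solutions with $xy(x+y)=0$ — contradicting nontriviality and completing the proof.

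\textbf{Main obstacle.} The crux is this last step: there is no a priori guarantee that the Mordell--Weil groups of the Jacobians of the $C_{\varepsilon}$, or of their quotients, can be computed, nor that one can arrange the rank to stay below the genus so that Chabauty applies; success is likely to hinge on choosing the descent, the auxiliary fields, and the quotients judiciously, possibly iterating with a further descent, and on a sizeable Mordell--Weil sieve computation. The secondary obstacle is sheer computational size in the $p=7$ case — the Hilbert modular forms spaces, and the number-field class-group and rank computations — which is why that case is conditional on GRH, in addition to the routine but necessary uniform verification of the modularity and big-image hypotheses for the Frey curve.
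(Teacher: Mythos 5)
Your overall architecture matches the paper's: dispatch $13\mid z$ via the modular result, then for $13\nmid z$ descend to auxiliary curves over a subfield of $\Q(\zeta_{13})$, sieve the unit ambiguity, and finish with Chabauty over number fields plus a Mordell--Weil sieve. But the middle part takes a genuinely different (and substantially harder) route, and it omits the two ideas that make the paper's computation tractable.

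First, the descent. The paper does \emph{not} factor $\phi_{13}$ into six quadratics over the degree-$6$ field $\Q(\zeta_{13})^{+}$. Instead it works over the \emph{cubic} subfield $K\subset\Q(\zeta_{13})^+$, where $\phi_{13}=F\cdot\sigma(F)\cdot\sigma^2(F)$ with $F$ a symmetric binary quartic, and exploits the explicit identity $(1+d)H^2=F+dG^4$ (with $G=x+y$, $H$ quadratic, $d\in K$) to convert $F(a,b)=ez_1^p$, $G(a,b)=z_2^p$ directly into a \emph{hyperelliptic} curve $Y^2=eX^p+\text{const}$ of genus $(p-1)/2$ over $K$ — genus $2$ for $p=5$, genus $3$ for $p=7$, over a cubic field. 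Your construction instead produces twisted Fermat curves $\lambda_1\varepsilon_1X^p+\lambda_2\varepsilon_2Y^p+\lambda_3\varepsilon_3Z^p=0$ of genus $(p-1)(p-2)/2$ — genus $6$ for $p=5$, genus $15$ for $p=7$ — over a sextic field. Even passing to quotient curves, computing Mordell--Weil groups and running Chabauty there is far beyond what was needed (and likely beyond current feasibility for $p=7$). The decisive extra input you are missing is the symmetric-quartic identity; without something equivalent, the genus does not come down to $(p-1)/2$.

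Second, the unit sieve. You describe sieving the tuples $\varepsilon$ by reducing $q_j(x,y)=\varepsilon_j w_j^p$ modulo auxiliary primes of $K$, which is the standard sieve from earlier work on signature $(13,13,\ell)$. The paper's key refinement is to sieve modulo $\fp_i^2$ for primes $\fp_i\mid p$ (i.e.\ locally at the exponent prime itself, modulo $p^2$ rather than modulo small split primes $q\equiv1\pmod{p}$); since $(\calO_L/\fp_i^2)^\times$ has order divisible by $p$, this gives a genuinely nontrivial $p$-th power condition and cuts the surviving units down to exactly two (the trivial one and one extraneous unit). The remaining extraneous unit is then eliminated by combining the sieve's mod-$19$ information with a trace-of-Frobenius comparison against the Frey curve $E_{a,b}/\Q(\sqrt{13})$ — a hybrid "modular + unit" sieve. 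Your proposal's modular method (over the degree-$6$ field) and your unit sieve are kept disjoint, and the standard unit sieve alone is unlikely to reduce to a single curve per $p$.

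After these reductions the paper's Chabauty step is modest: genus-$2$ rank-$2$ over a cubic field for $p=5$ (Siksek's number-field Chabauty with a Mordell--Weil sieve at a prime above $47$), and genus-$3$ rank-$1$ under GRH for $p=7$ (classical Chabauty suffices). Your "main obstacle" paragraph correctly identifies where the difficulty would lie in your version; in the paper that obstacle is precisely what the cubic-field quartic identity and the mod-$p^2$ sieve are designed to circumvent.
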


Solving generalized Fermat equations (with unit coefficients) seems to be grinding to a halt. This work illustrates how combining and strengthening a variety of modern techniques can overcome difficulties and thereby still push the boundary of completely resolved interesting GFE's.

It is natural to distinguish between two cases of solutions $(a,b,c)$ to~\eqref{GFE:13-13-p}, namely $13 \nmid c$ and $13 \mid c$.
The latter case can be handled for any exponent, meaning we will prove the following theorem.
\begin{theorem}
 \label{thm:1313n}
 For all integers~$n \geq 2$, the equation
 \begin{equation}
  \label{eq:1313n}
  x^{13} + y^{13} = z^n, \quad x,y,z \in \Z ,\quad \gcd(x,y,z)=1
 \end{equation}
has no non-trivial solutions $(a,b,c)$ such that $13 \mid c$ (or equivalently $13 \mid a+b$).
\end{theorem}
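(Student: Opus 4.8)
The plan is to reduce, by a classical descent, to an equation $\phi_{13}(a,b)=13\,t^{\ell}$ over $\Z$ (with $\phi_{13}(x,y)=(x^{13}+y^{13})/(x+y)$ and $\ell$ a prime), and then finish by the modular method over $\Q(\zeta_{13})^+$, the remaining small exponents being handled separately. First, since $a^{13}\equiv a$ and $b^{13}\equiv b\pmod{13}$, the hypothesis $13\mid c$ is equivalent to $13\mid a+b$; and it suffices to take $n=\ell$ prime, because a non-trivial solution $(a,b,c)$ for exponent $n$ with $13\mid c$ yields the non-trivial solution $(a,b,c^{n/\ell})$ for any prime $\ell\mid n$, still with $13\mid c^{n/\ell}$. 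So assume $n=\ell$ is prime, $13\mid a+b$ and $abc\neq 0$; then $a+b\neq 0$ and $13\nmid ab$. From $a^{13}+b^{13}=(a+b)\,\phi_{13}(a,b)$ and the lifting-the-exponent lemma, $v_{13}(a^{13}+b^{13})=v_{13}(a+b)+1$, whence $v_{13}(a+b)=\ell\,v_{13}(c)-1\ge\ell-1$ and $v_{13}(\phi_{13}(a,b))=1$; since $\gcd(a+b,\phi_{13}(a,b))\mid 13$, separating the coprime prime-to-$13$ parts shows that, up to sign,
\[
a+b=\pm\,13^{\,\ell v_{13}(c)-1}s^{\ell},\qquad \phi_{13}(a,b)=13\,t^{\ell},
\]
with $s,t$ coprime integers, $13\nmid st$, $t\ge 1$. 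It then remains to prove that $\phi_{13}(a,b)=13\,t^{\ell}$ has no solution with $\gcd(a,b)=1$, $13\mid a+b$ and $ab\neq 0$.

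For this I would descend into $\Z[\zeta_{13}]$, which has class number one. Factoring $\phi_{13}(a,b)=\prod_{j=1}^{12}(a+\zeta_{13}^{\,j}b)$ and checking via the valuation at $\fp=(1-\zeta_{13})$ that, because $13\mid a+b$, each factor is exactly divisible by $\fp$, one obtains $a+\zeta_{13}b=u(1-\zeta_{13})\alpha^{\ell}$ for a unit $u\in\Z[\zeta_{13}]^{\times}$ and some $\alpha\in\Z[\zeta_{13}]$. Passing to $K^+=\Q(\zeta_{13})^+$ (also of class number one) and completing the square in the quadratic factor $a^2+(\zeta_{13}+\zeta_{13}^{-1})ab+b^2$ rewrites the problem as a ternary equation over $\Z_{K^+}$ to which one attaches a Frey curve. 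Since $13\mid a+b$, the reduction of this Frey curve at the prime of $K^+$ above $13$ is tightly constrained, so that for $\ell$ sufficiently large, level lowering (after establishing irreducibility of the mod-$\ell$ representation via the standard big-image results for such Frey curves) yields a Hilbert newform over $K^+$ of parallel weight $2$ whose level is supported only on the primes above $2$ and $13$; showing that this space is empty, or that every form in it is incompatible with the Frey curve at a few auxiliary primes, gives a contradiction for all $\ell$ beyond an explicit bound. The remaining small exponents are treated directly: $\ell=2$ and $\ell=3$ by the known resolutions of $(13,13,2)$ and $(13,13,3)$ in \cite{BennetSkinner} and \cite{BenVatYaz}, and any exponents in between by running the above descent together with a Chabauty and Mordell--Weil sieve computation over $K^+$ (or over $\Q(\zeta_{13})$) on the superelliptic curves associated to $\phi_{13}(x,y)=13\,t^{\ell}$.

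The hardest step I anticipate is the modular argument over the sextic field $\Q(\zeta_{13})^+$: one needs the Frey curves that occur to be modular and the level-lowered spaces of Hilbert newforms to be genuinely trivial (or emptiable), uniformly in $\ell$ and without recourse to GRH. A secondary difficulty is confirming that the few exponents lying just beyond those treated in \cite{BennetSkinner} and \cite{BenVatYaz} are within reach of the Chabauty and sieve methods over number fields, together with the careful bookkeeping of signs and of the exact power of $13$ throughout the descent.
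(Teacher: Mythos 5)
Your overall framework---reduce to prime exponent, descend in $\Z[\zeta_{13}]$ to reach $a+\zeta_{13}b = u(1-\zeta_{13})\alpha^\ell$, build a Frey curve over a totally real subfield of $\Q(\zeta_{13})$, apply modularity and level lowering, and eliminate the resulting Hilbert newforms---is the right strategy in broad outline and matches the spirit of the paper. The descent computations (the use of lifting-the-exponent to pin down $v_{13}$, the observation that $13\,\Vert\,\phi_{13}(a,b)$, and the factorization $\phi_{13}(a,b)=13t^{\ell}$) are correct. However, there is a genuine gap in the plan as written, and it concerns precisely the part you flag as a secondary difficulty.

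Your proposal only promises a contradiction \emph{for $\ell$ beyond an explicit bound}, and delegates the remaining exponents to Chabauty and Mordell--Weil sieve computations on the superelliptic curves attached to $\phi_{13}(x,y)=13t^{\ell}$. That fallback cannot work: those curves have genus growing linearly in $\ell$, live over a number field of degree $6$ (or $12$), and such computations become infeasible already for very small $\ell$. Meanwhile, the bound coming from a generic irreducibility-plus-elimination argument is typically far out of Chabauty's reach, and there is no reason a priori to expect it to be as small as you hope. In other words, the approach you sketch would leave an uncontrolled gap of exponents and so does not prove the theorem as stated (which asserts the result for \emph{all} $n\ge 2$). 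The paper instead achieves full uniformity in $\ell$ by a multi-Frey argument that you do not anticipate: it uses \emph{two} Frey elliptic curves, $E_{a,b}$ over $\Q(\sqrt{13})$ and $F_{a,b}$ over the cubic subfield $K$ of $\Q(\zeta_{13})$, already developed in~\cite{BCDF,DF2,Frrp}. The curve $E_{a,b}$ is used first (restricting its mod-$p$ representation to $G_{K^+}$ and exploiting good versus additive reduction at the prime over $13$) to prove the $2$-adic refinement $4\mid a+b$. That refinement forces the level for the second curve $F_{a,b}/K$ down to the single level $2\fq_{13}$, where there are only four newforms; all are eliminated uniformly, and the two stubborn forms that survive trace comparisons at $p=7$ are killed by showing their mod~$7$ representations are reducible (using a rational $7$-torsion point and \cite[Prop.~8.3]{BCDDF}) while $\rhobar_{F,7}$ is irreducible. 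No Chabauty computation is required for this theorem.

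Two further points worth noting. First, your irreducibility remark is too optimistic: the paper cannot simply cite ``standard big-image results''---for $p=5$ it needs a bespoke argument using the exact valuation of the minimal discriminant of $E_{a,b}$ at the inert prime $2$, Kraus's classification of the semistability defect, and the fact that $3\nmid 80$. Second, you do not address $\ell=13$; while this is handled quickly by Fermat's Last Theorem (indeed by Kummer, since $13$ is regular), it should be mentioned explicitly since it is excluded from the modular argument.
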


The outline of this paper is as follows.
In Section~\ref{sec:multiFrey}, we prove Theorem~\ref{thm:1313n} by Hilbert modular methods with a multi-Frey approach. In doing so, we extend the general applicability of a Frey curve studied in~\cite{BCDF}, which could be of independent interest in other contexts.
In Section~\ref{sec:HE-curves}, we reduce the resolution of~\eqref{GFE:13-13-p} in case $13\nmid c$ for a fixed prime $p$ to determining rational points on many hyperelliptic curves.
We introduce strong \lq unit sieves\rq\ in Section~\ref{sec:descent-unit}, to reduce the amount of of hyperelliptic curves (per prime $p$) from the previous section that need to be considerd.
First, without the modular methods, we reduce to two hyperelliptic curves. Next, with the modular method, we reduce further to one curve.
For $p=5$ and $p=7$ (assuming GRH for the latter), we use Chabauty methods in Section~\ref{sec:Cp-points} to determine all rational points on the final hyperelliptic curve left, thereby completing the proof of Theorem~\ref{thm:13-13-5or7}.
Finally, we look back, and discuss in Section~\ref{sec:alternative-methods} the (non-)applicability of alternative methods, illustrating the complementarity of our methods used.

\subsection*{Notation and conventions}

For a number field $F$ we denote its ring of integers by $\calO_F$.

Let $\zeta$ be a primitive $13$-th root of unity and let $L = \Q(\zeta)$.
Let $K$ be the cubic subfield of $L$. Explicitly, let $\rr:=\zeta+\zeta^{-1}+\zeta^5+\zeta^{-5}$, then $K=\Q(\rr)$ and $\rr^3 + \rr^2 - 4\rr + 1 = 0$.
Note that $K$ is totally real, with a fundamental system of units for its ring of integers given by $\rr$ and $1-\rr$.

\section{A multi-Frey approach to Theorem~\ref{thm:1313n}}\label{sec:multiFrey}

In \cite{BCDF}, the authors study in great detail two Frey curves
(originally introduced in~\cite{DF2, Frrp}) associated with the Fermat-type equations of the form
\begin{equation}\label{eq:1313dp}
x^{13} + y^{13} = dz^p.
\end{equation}
More precisely, one Frey curve~$E_{a,b}$ is defined over $\Q(\sqrt{13})$ and the other~$F_{a,b}$ is defined over the totally real cubic subfield~$K$ of~$L=\Q(\zeta)$;
see~\cite[\S 7]{BCDF} for definitions and various properties. The exposition in {\it loc. cit.} is oriented towards the case $d=3$, but most of the constructions and results there apply for~$d=1$, including the definitions of the Frey curves, as these are built from factors of the left hand side of~\eqref{eq:1313dp}. Moreover, in {\it loc. cit.} there is the underlying condition that all primes~$\ell \mid d$ satisfy $\ell \not\equiv 1 \pmod{13}$, which is clearly true for $d=1$.

Let~$\fq_{13}$ be the unique prime in~$K$ above~$13$.
Note that $2$ is inert in both~$\Q(\sqrt{13})$ and~$K$.

The purpose of this section is to prove Theorem~\ref{thm:1313n}, for which we have two proofs. The more direct proof follows an application of the modular method using only the Frey curve $F_{a,b}/K$. This requires elimination of Hilbert newforms at levels $2\fq_{13}$ and~$2^3\fq_{13}$, which makes the proof computationally heavy.
The less direct proof, which we will give below, uses the multi-Frey technique combining both $E_{a,b}$ and $F_{a,b}$. The reasons for opting to present the less direct proof are the following:

(i) Some of the results in \cite[\S 7.1]{BCDF} do not apply directly to our main case of interest, i.e. $d = 1$ (and $13|c$, equivalently $13|a+b$) in~\eqref{eq:1313dp}, because they rely on the additional assumption $3 \mid a+b$ or $3 \mid d$, which we do not have. Thus our proof requires establishing some properties of $E_{a,b}$ with arguments independent of~$d$, expanding the usefulness of this Frey curve.

(ii) In the part of the argument that uses $F_{a,b}$, we only need to do elimination of newforms at level~$2\fq_{13}$, reducing significantly the computational time of the proof. Furthermore, the more interesting parts of the elimination step in the more direct proof occurs at the level~$2\fq_{13}$ and so it is also present in the proof we give below.

\subsection*{Proof of Theorem~\ref{thm:1313n}}
It suffices to prove Theorem~\ref{thm:1313n} for all $n = p$ a prime number.
For $p = 2$ and $p=3$ it follows, respectively, from \cite[Theorem 1.1]{BennetSkinner} and \cite[Theorem 1.5]{BenVatYaz}.
For $p=13$, it follows from Fermat's Last Theorem; in fact, Kummer's classical 19-th century work suffices for this, as $13$ is a regular prime ($L=\Q(\zeta)$ even has trivial class group).
So we can and will assume $n=p \geq 5$ prime and $p \neq 13$ for the rest of this section.

Suppose that $(a,b,c)$ is a primitive solution to~\eqref{eq:1313n} with exponent~$p$.
Let $E = E_{a,b}$ over $\Q(\sqrt{13})$ be the Frey curve attached to~$(a,b,c)$
as defined in~\cite[p. 8666]{BCDF}. We denote by~$\rho_{E,p}$ the
$p$-adic Galois representation attached to~$E$ and by $\rhobar_{E,p}$ the
mod~$p$ reduction of $\rho_{E,p}$, i.e., the $p$-torsion Galois representations attached to~$E$.

\begin{proposition} The representation $\rhobar_{E,p}$ is irreducible.
\label{prop:irredE}
\end{proposition}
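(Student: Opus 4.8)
The plan is to show that the mod~$p$ representation $\rhobar_{E,p}$ attached to the Frey curve $E = E_{a,b}/\Q(\sqrt{13})$ cannot be reducible by combining local information at the primes of bad reduction with the restrictions imposed by having a rational point of order $p$ (or a rational subgroup of order $p$) on a base-changed elliptic curve. Since $E$ is a Frey curve built from the factorization of $a^{13}+b^{13}$, its conductor is supported on $\fq_{13}$, the prime $2$, and the primes dividing $c$; as usual the key is that $E$ has multiplicative reduction (with $p$ dividing the valuation of the minimal discriminant) at primes dividing $abc$ but not dividing the conductor exponent trivially, so $\rhobar_{E,p}$ is finite flat at the other places. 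First I would recall from \cite[\S 7]{BCDF} the precise conductor of $E_{a,b}$ over $\Q(\sqrt{13})$ and the shape of its minimal discriminant, noting that the only primes where the behaviour is not semistable are $2$ (inert) and $\fq_{13}$.

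Next, I would argue by contradiction: suppose $\rhobar_{E,p} \sim \begin{pmatrix}\theta & * \\ 0 & \theta'\end{pmatrix}$ for characters $\theta, \theta' \colon G_{\Q(\sqrt{13})} \to \overline{\F}_p^\times$ with $\theta\theta' = \chi_p$ the mod~$p$ cyclotomic character. The standard approach is to pin down the ramification of $\theta$ and $\theta'$: away from $p$ they can only ramify at primes where $E$ has bad reduction, and at primes of multiplicative reduction the characters are unramified (the local representation is, up to twist, $\begin{pmatrix}1&*\\0&\chi_p\end{pmatrix}$ with $*$ ramified); at the finite-flat primes the Serre/Raynaud analysis controls $\theta|_{I_\ell}$ in terms of fundamental characters. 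One then feeds this into a class-field-theory/Galois-cohomology bound: the conductor of $\theta$ (hence of $\theta'$) divides a fixed small modulus depending only on $2$, $\fq_{13}$, and $p$, and the class number and unit structure of $\Q(\sqrt{13})$ are benign, so $\theta$ is forced to be a power of $\chi_p$ times a character of small order. Comparing traces of Frobenius $\tr \rhobar_{E,p}(\Frob_\fp) = a_\fp(E)$ for a few auxiliary primes $\fp$ of good reduction of small norm then yields a contradiction with the Hasse bound $|a_\fp(E)| \le 2\sqrt{N\fp}$, ruling out the reducible case for all but finitely many small $p$, which are then checked directly. Since by assumption $p \ge 5$ and $p \ne 13$, and $\Q(\sqrt{13})$ has class number one with fundamental unit of norm~$-1$, this argument should go through cleanly.

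There is an alternative, often cleaner, route that I would actually prefer: invoke the known classification of $\Q(\sqrt{13})$-rational (or $\Q$-rational, after noting $E_{a,b}$ or its relevant quadratic twist is essentially defined over $\Q$, or using $X_0(p)$ and $X_1(p)$ over quadratic fields) subgroups and points of order $p$ on elliptic curves with the specific type of bad reduction $E$ has. Indeed, multiplicative reduction at a prime not above $p$ together with $\rhobar_{E,p}$ reducible forces, after a twist, one of the characters to be unramified at that prime; running this at $\fq_{13}$ and at $2$ simultaneously, plus finite-flatness at primes above $p$, severely constrains the isogeny character and typically contradicts the existence of such curves for $p \ge 5$ by Mazur-type or Momose-type results, or by the explicit determination of $\mathbb{Q}(\sqrt{13})$-points on the relevant modular curves. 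I would also use that $2$ is inert in $\Q(\sqrt{13})$, so the residue field at $2$ is $\F_4$ and the only roots of unity available there are cube roots, which rigidly pins down $\theta|_{I_2}$ and often kills the reducible case immediately.

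The main obstacle I expect is the bookkeeping at the two \emph{non-semistable} primes $2$ and $\fq_{13}$: unlike the classical Frey curve over $\Q$, here $E$ has additive reduction at $\fq_{13}$ (and possibly at $2$ depending on the parities of $a,b$), so one must carefully compute the image of inertia $\rhobar_{E,p}(I_{\fq_{13}})$ and $\rhobar_{E,p}(I_2)$ — using Tate's algorithm or the tables in \cite{BCDF} — to know exactly which characters $\theta$ can occur there. If the additive-reduction local types happen to force wild ramification or a large image of inertia, the reducible case may be excluded outright; if not, one is back to the class-field-theory count, and the delicate point is ensuring the modulus is small enough that the resulting finite list of possible $\theta$ can all be eliminated by Hasse-bound comparisons without needing deep input. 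A secondary subtlety is handling small $p$ (here just $p=5$ and $p=7$, and more generally $p \le 13$) where generic arguments fail and a short direct computation — e.g. checking that no elliptic curve over $\Q(\sqrt{13})$ with the prescribed conductor support admits a rational $p$-isogeny — is required; this is routine given the modularity/finiteness results already available but must be stated.
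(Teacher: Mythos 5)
You have the right general intuition for the $p=5$ case, but the proposal never lands on the actual mechanism, and misses a shortcut for $p\geq 7$. For $p \geq 7$ the irreducibility is cited directly from \cite[Proposition~8]{BCDF}, so no work is needed; only $p=5$ requires a new argument. For $p=5$, you speculate that a large image of inertia at $2$ \emph{might} exclude reducibility, but you do not identify the concrete obstruction, and you fall back on a class-field-theory count with Hasse-bound comparisons, which is considerably harder and not carried out. You also misdirect the local analysis at $2$ towards roots of unity in the residue field $\F_4$; this does not control $\rhobar_{E,5}(I_2)$ in the relevant way.

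The paper's argument at $2$ is the following. From \cite[Proposition~3.3]{DF2}, the Frey curve $E$ has potentially good reduction at the inert prime $2$ with $\vv_2(\Delta_m)=4$. Let $e(E)$ denote the degree of the minimal extension of $M_2^{\un}$ over which $E$ attains good reduction. Since $\vv_2(\Delta_m)\not\equiv 0\pmod 3$, Kraus's theorem \cite[Th\'eor\`eme~3]{Kraus1990} gives $3\mid e(E)$ and $e(E)\mid 24$. By N\'eron--Ogg--Shafarevich, $\rho_{E,5}(I_2)\simeq\Gal(N/M_2^{\un})$ has order $e(E)$; since $5\nmid e(E)$, reduction mod $5$ preserves this order, so $\#\rhobar_{E,5}(I_2)=e(E)$ is divisible by $3$. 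But if $\rhobar_{E,5}$ were reducible, $\rhobar_{E,5}(I_2)$ would lie in a Borel of $\GL_2(\F_5)$, forcing its order to divide $4^2\cdot 5=80$, and $3\nmid 80$ gives the contradiction. The essential inputs you are missing are the precise valuation $\vv_2(\Delta_m)=4$ (from \cite{DF2}) and Kraus's criterion linking $\vv_2(\Delta_m)\bmod 3$ to $3\mid e(E)$; without those, the ``large image of inertia'' route remains a hope rather than a proof.
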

\begin{proof} Set $M = \Q(\sqrt{13})$ and recall that 2 is inert in~$M$. Let $M_2$ be the completion of~$M$ at~$2$ and $M_{2}^{\un}$ its maximal unramified extension.
Let also~$I_2 \subset G_M$ be an inertia subgroup at~$2$.

For $p \geq 7$ the result follows directly from \cite[Proposition~8]{BCDF}, so we are left with $p=5$.

Suppose that $\rhobar_{E,5}$ is reducible, that is,
\[ \rhobar_{E,5} \simeq \begin{pmatrix} \theta & \star\\ 0 & \theta' \end{pmatrix}
\quad \text{with} \quad \theta, \theta' : G_M \rightarrow \F_5^*
\quad \text{satisfying} \quad \theta \theta' = \chi_5,\]
where $\chi_5$ is the mod~$5$ cyclotomic character. Thus the order of
$\rhobar_{E,5}(I_2)$ divides~$80 = 2^4 \cdot 5$.

From the proof of \cite[Proposition~3.3]{DF2}
we know that~$E$ has potentially good reduction at~$2$ and
$\vv_2(\Delta_m) = 4$ where~$\Delta_m$ is the discriminant of a minimal model for~$E$. Let $N/M_{2}^{un}$ be the extension of minimal degree over which $E/M_2$ acquires good reduction.
Denote by $e(E)$ the degree of~$N/M_{2}^{\un}$.
By Neron--Ogg--Shafarevich we know that $\rho_{E,5}(I_2) \simeq \Gal(N/M_{2}^{un})$
has order $e(E)$.

Since $\vv_2(\Delta_m) \not\equiv 0 \pmod{3}$ it follows from \cite[Th\'eor\`em 3]{Kraus1990} that $3 \mid e(E)$ and $e(E) \mid 24$. Moreover, since $5 \nmid e(E)$, reduction modulo~$5$
preserves the order of $\rho_{E,5}(I_2)$, thus $\rhobar_{E,5}(I_2)$ also has order $e(E)$, giving a contradiction because $3 \nmid 80$.
\end{proof}

\begin{proposition}
Assume~$p \ge 5$ and $p \not= 13$.  Then we have
\begin{equation}
\rhobar_{E,p} \simeq \rhobar_{Z,p}
\label{E:trivialisos}
\end{equation}
where the elliptic curve $Z$ equals $E_{1,-1}$, $E_{1,0}$, or $E_{1,1}$.
\label{P:iso2}
\end{proposition}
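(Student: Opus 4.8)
The plan is to run the modular method for the Frey curve $E = E_{a,b}$ over $M = \Q(\sqrt{13})$ and then exhaust the finite list of Hilbert newforms it can arise from. First, $E$ is modular by the modularity theorem for elliptic curves over real quadratic fields, and $\rhobar_{E,p}$ is irreducible by Proposition~\ref{prop:irredE}; together with the remaining (standard) hypotheses of the Ribet-type level-lowering theorems for Hilbert modular forms — verified along the lines of \cite[\S7]{BCDF} (in the case $d=1$, see below) — this shows that $\rhobar_{E,p} \simeq \rhobar_{f,\fP}$ for some Hilbert newform $f$ over $M$ of parallel weight $2$ and trivial character, of some level $\mathfrak{m}$, and some prime $\fP \mid p$ of the Hecke field of $f$.

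Second I would pin down $\mathfrak{m}$ by a conductor computation for $E_{a,b}$. Since $E$ is a Frey curve, at a prime $\mathfrak{l}$ of $M$ dividing $abc$ and lying above neither $2$ nor $13$ it has good reduction (if $\mathfrak{l} \mid ab$) or multiplicative reduction with $p \mid \vv_{\mathfrak{l}}(\Delta)$ (if $\mathfrak{l} \mid c$), the latter making $\rhobar_{E,p}$ unramified at $\mathfrak{l}$; hence such primes do not divide $\mathfrak{m}$, and $\mathfrak{m}$ is supported on the primes above $2$ and $13$. At the inert prime above $2$ the local analysis already performed in the proof of Proposition~\ref{prop:irredE} (potentially good reduction, $\vv_2(\Delta_m) = 4$, $e(E) \in \{3,6,12,24\}$) bounds the conductor exponent there, and at the ramified prime $\fq_{13}$ the reduction type is read off from an integral model. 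This leaves an explicit short list of possibilities for $\mathfrak{m}$.

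Third, for each such level I would compute the space of parallel weight $2$ Hilbert newforms over $M$ with \Magma\ and check that it is spanned by forms with rational Hecke eigenvalues, each of which corresponds, up to isogeny, to one of the three elliptic curves $E_{1,-1}$, $E_{1,0}$, $E_{1,1}$. (Should some other newform $f'$ survive, it would have to be eliminated by comparing $a_{\mathfrak{l}}(f')$ with the finitely many values of $a_{\mathfrak{l}}(E)$ allowed by the Frey shape of $E$, for a suitable small prime $\mathfrak{l} \nmid 2 \cdot 13 \cdot p$ — and this elimination would need to hold uniformly in $p$.) Since isogenous elliptic curves have isomorphic mod-$p$ Galois representations, we obtain $\rhobar_{E,p} \simeq \rhobar_{Z,p}$ with $Z \in \{E_{1,-1}, E_{1,0}, E_{1,1}\}$.

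The main obstacle, and the reason this is not a direct citation of \cite{BCDF}, lies in the second step: several of the reduction-type and conductor computations in \cite[\S7.1]{BCDF} invoke the hypothesis $3 \mid a+b$ (or $3 \mid d$), neither of which is available here, so the behaviour of $E_{a,b}$ at the primes above $2$ and at $\fq_{13}$ — and thus the precise set of admissible Hilbert newform levels — has to be re-established by arguments that do not use this hypothesis. Getting $\mathfrak{m}$ exactly right, rather than merely bounding it, is what keeps the newform enumeration in the third step feasible.
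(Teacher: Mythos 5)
Your high-level plan (modularity, irreducibility via Proposition~\ref{prop:irredE}, level lowering, enumeration of Hilbert newforms over $\Q(\sqrt{13})$) is the same strategy that underlies the result, but the paper does not re-run it from scratch: it quotes \cite[Proposition 9]{BCDF} directly and patches exactly the two places where that statement does not literally cover $d=1$. You have slightly mis-diagnosed what needs patching. The hypothesis $3 \mid a+b$ (or $3 \mid d$) in \cite[\S 7.1]{BCDF} is used \emph{only} to guarantee irreducibility of $\rhobar_{E,5}$ via \cite[Proposition 8]{BCDF}; the local reduction-type and conductor computations at the primes above $2$ and $13$ (for instance $\vv_2(\Delta_m)=4$ and potential good reduction at $2$, from \cite[Proposition 3.3]{DF2}) are valid for all coprime $(a,b)$ without that hypothesis, so you do not need to re-establish the admissible levels as your second step proposes — you can simply import the level-lowering conclusion once irreducibility is in hand, which is what Proposition~\ref{prop:irredE} supplies for $p=5$. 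For $p=11$ and $p\ge 17$ nothing at all needs redoing.

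The more serious problem is your third step. You plan to check that the newform space at the relevant levels is spanned by rational-eigenvalue forms, with a fallback of eliminating any stray form by a trace comparison ``uniformly in $p$.'' That fallback breaks at $p=7$: at level $2^3\cdot 13$ there is a Hilbert newform $g$ with Hecke field $\Q(\sqrt{2})$, and for $p=7$ the trace comparison does \emph{not} eliminate $g$ — one actually has a genuine congruence $\rhobar_{g,\fp_7}\simeq\rhobar_{E_{1,-1},7}$ (\cite[Proposition 6.1]{BCDDF}), so $g$ is not to be discarded but identified with one of the curves on the list. Without that identification your argument would leave the $p=7$ case open. So the structure of your proof is sound and even illuminating, but to be complete you need to replace the ``eliminate uniformly in $p$'' step with the case split in the paper: cite \cite[Proposition 9]{BCDF} for $p=11$ and $p\ge 17$, use Proposition~\ref{prop:irredE} to rescue the $p=5$ case, and handle $p=7$ via the congruence $\rhobar_{g,\fp_7}\simeq\rhobar_{E_{1,-1},7}$ rather than elimination.
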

\begin{proof} For $p =11$ and $p \geq 17$ this is follows directly from~\cite[Proposition 9]{BCDF}. For $p=7$ the same proposition includes the additional possibility that
$\rhobar_{E,p} \simeq \rhobar_{g,\fp_7}$ for a Hilbert newform $g$ over $\Q(\sqrt{13})$ of parallel weight $2$, trivial character, level $2^3 \cdot 13$, with field of coefficients $\Q(\sqrt{2})$, and a choice of prime $\fp_7$ above $7$ in this field. However, as explained in Remark 7.4 of {\it loc. cit} and proved in~\cite[Proposition~6.1]{BCDDF} we have
$\rhobar_{g,\fp_7} \simeq \rhobar_{E_{1,-1},7}$ which is already among the cases in the statement, completing the proof for $p=7$ as well.

Note that the conclusion of \cite[Proposition 9]{BCDF} also holds for $p=5$
under the additional hypothesis $3 \mid a+b$ which we do not have. This hypothesis is there to guarantee irreducibility of~$\rhobar_{E,5}$ via \cite[Proposition~8]{BCDF} and consequently apply level lowering \cite[Lemma~7]{BCDF}. In our setting, we have irreducibility of $\rhobar_{E,5}$ by Proposition~\ref{prop:irredE} and everything else in {\it loc. cit} applies exactly the same, yielding the result for $p=5$.
\end{proof}

We remark that all the results above did not use the assumption $13 \mid a+b$ in Theorem~\ref{thm:1313n}.

\begin{theorem}\label{thm:overQ13}
Let $(a,b,c)$ be a solution to~\eqref{eq:1313n} with exponent $p \geq 5$, $p \neq 13$.

If $13 \mid a+b$ then $4 \mid a+b$.
\end{theorem}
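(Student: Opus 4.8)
The plan is to use Proposition~\ref{P:iso2}, which gives an isomorphism $\rhobar_{E,p} \simeq \rhobar_{Z,p}$ where $Z \in \{E_{1,-1}, E_{1,0}, E_{1,1}\}$ is one of the curves attached to a trivial solution, and to extract from this isomorphism arithmetic information at the prime $2$. The key observation is that the behaviour of the Frey curve $E_{a,b}/M$ (with $M = \Q(\sqrt{13})$) at the inert prime $2$ depends on the $2$-adic valuation of $a+b$, since $2$ divides the left-hand side of the defining factorisation only through $a+b$; on the other hand, the mod $p$ representation $\rhobar_{Z,p}$ attached to the trivial curve has a fixed, known conductor exponent at $2$. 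Comparing these via Neron--Ogg--Shafarevich forces a congruence on $v_2(a+b)$. Concretely, I expect to show that if $2 \nmid a+b$, then $E_{a,b}$ has multiplicative (or some specific additive) reduction at $2$ incompatible with the reduction type of $Z$, while the three trivial curves $E_{1,-1}, E_{1,0}, E_{1,1}$ all share the reduction behaviour at $2$ corresponding to $4 \mid a+b$, yielding the claim.

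More precisely, first I would recall from \cite[\S 7]{BCDF} (or re-derive, as in the proof of Proposition~\ref{prop:irredE} via \cite{Kraus1990} and \cite{DF2}) the precise description of $\rhobar_{E,p}|_{I_2}$ as a function of $v_2(a+b)$: since $13 \mid a+b$ we have $a+b = 13^k u$ with $13 \nmid u$, and the relevant case split is $v_2(a+b) = 0$ versus $v_2(a+b) \geq 2$ (recall $a,b$ coprime and of opposite parity would give $v_2(a+b)=0$, whereas $a \equiv b \pmod 2$ forces both odd so $2 \mid a+b$; then $v_2(a+b) = 1$ is the intermediate case to rule out). The model of $E_{a,b}$ will have $v_2(\Delta)$ depending on $v_2(a+b)$, and applying Kraus's tables one reads off $e(E_{a,b})$, the order of $\rho_{E,p}(I_2)$, in each case. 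Then I would compute $e(Z)$ for each of the three trivial curves — for instance $E_{1,-1}$ corresponds to $a+b = 0$, and the others to $a+b \in \{1, 2\}$ after normalising — and check that the only value of $v_2(a+b)$ consistent (via $\rhobar_{E,p}(I_2) \simeq \rhobar_{Z,p}(I_2)$, using $p \geq 5$ so that reduction mod $p$ preserves the prime-to-$p$ inertia image) with the inertial type of any admissible $Z$ is $v_2(a+b) \geq 2$.

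The main obstacle I anticipate is the bookkeeping around the case $v_2(a+b) = 1$: here $E_{a,b}$ may have additive reduction at $2$ with a valuation of the minimal discriminant that is neither clearly distinguishable from nor clearly equal to the trivial cases at the level of the inertial image alone, so one may need the finer information of the full restriction $\rhobar_{E,p}|_{G_{M_2}}$ (not just $|_{I_2}$), or the conductor exponent, or an explicit Tate-curve / Kraus-type analysis distinguishing quadratic versus higher-order ramification. A secondary subtlety is that the isomorphism in Proposition~\ref{P:iso2} is only up to the unknown choice of $Z$, so the argument must be uniform over all three trivial curves; fortunately $E_{1,-1}$, $E_{1,0}$, $E_{1,1}$ are explicit and one can simply verify directly that each has the reduction type at $2$ forcing $4 \mid a+b$, perhaps after noting that $E_{1,0}$ and $E_{0,1}$ (and the negatives) are isomorphic or twists and reduce the casework. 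I would also double-check the edge case $c=0$ (the trivial solution itself) is excluded or handled consistently, and confirm the statement is vacuous or immediate when $a + b = 0$.
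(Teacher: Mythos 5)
Your proposal tries to get the entire theorem out of the prime $2$, but it has a genuine gap: the claim that all three of $E_{1,-1}$, $E_{1,0}$, $E_{1,1}$ have a $2$-adic inertial type compatible only with $4 \mid a+b$ is false. The curve $E_{1,0}$ is exactly the specialization of the Frey family at $(a,b)=(1,0)$, where $a+b=1$ and so $v_2(a+b)=0$; similarly $E_{1,1}$ has $a+b=2$, so $v_2(a+b)=1$. Thus if a solution with $13 \mid a+b$ and $v_2(a+b)=0$ gave rise to $\rhobar_{E_{a,b},p}\simeq\rhobar_{E_{1,0},p}$, the two sides would have identical behaviour at $2$, and your local-at-$2$ comparison would produce no contradiction whatsoever. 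The same applies to $E_{1,1}$ when $v_2(a+b)=1$. In other words, your ``verify directly that each $Z$ has the reduction type forcing $4\mid a+b$'' step fails for two of the three candidates, and those are precisely the cases that need to be ruled out.

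The paper's proof closes this gap with an argument at the prime $13$ \emph{before} going local at $2$. The hypothesis $13 \mid a+b$ is used as follows: base-changing to the maximal totally real subfield $K^+$ of $L=\Q(\zeta)$ and letting $\pi$ be the unique prime of $K^+$ above $13$, one has (by \cite[Proposition~3.1]{DF2}) that $E_{1,0}/K^+$ and $E_{1,1}/K^+$ have bad additive reduction at $\pi$, while $E_{1,-1}/K^+$ has good reduction at $\pi$; on the other hand, $E_{a,b}/K^+$ has good reduction at $\pi$ precisely because $13 \mid a+b$. Comparing ramification at $\pi$ of the two sides of $\rhobar_{E,p}|_{G_{K^+}} \simeq \rhobar_{Z,p}|_{G_{K^+}}$ eliminates $Z=E_{1,0}$ and $Z=E_{1,1}$, leaving $Z = E_{1,-1}$. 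Only then does the paper invoke the local-at-$2$ argument of \cite[Theorem~7, part (B)]{BCDF} -- which is essentially what you had in mind, but with $Z$ already pinned down to $E_{1,-1}$ -- to conclude $4 \mid a+b$. So the $2$-adic part of your proposal is on the right track as a second step, but the necessary first step, the $13$-adic elimination of $E_{1,0}$ and $E_{1,1}$, is entirely missing from your argument.
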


\begin{proof}
Let $E = E_{a,b}$ be the Frey curve associated with $(a,b,c)$.

From Proposition~\ref{P:iso2}, we know that $\rhobar_{E,p} \simeq \rhobar_{Z,p}$, where $Z$ is
$E_{1,-1}$, $E_{1,0}$ or $E_{1,1}$.

Let $K^+$ be the maximal totally real subfield of $L=\Q(\zeta)$ and $\pi$ denote the unique prime ideal in~$K^+$ above 13. From \cite[Proposition~3.1]{DF2}, the base change curves
$E_{1,0}/K^+$ and $E_{1,1}/K^+$ have bad additive reduction at~$\pi$ while~$E_{1,-1}/K^+$ has good reduction at~$\pi$. Furthermore, $E_{a,b}/K^+$ also has good reduction at~$\pi$ because $13 \mid a+b$. Therefore, for $Z = E_{1,0}$ and $Z=E_{1,1}$,
restricting the isomorphism $\rhobar_{E,p} \simeq \rhobar_{Z,p}$ to~$G_{K^+}$ gives a contradiction because
$\rhobar_{E,p}|_{G_{K^+}}$ is unramified at~$\pi$ whilst $\rhobar_{Z,p}|_{G_{K^+}}$ ramifies at~$\pi$. We conclude that $\rhobar_{E,p} \simeq \rhobar_{E_{1,-1},p}$.
Now the proof of part (B) in \cite[Theorem 7]{BCDF} applies exactly the same to conclude $4 \mid a+b$. 

(The argument in the proof of \cite[Theorem 7]{BCDF} part (B) is purely local at~$2$ and so independent of the condition $3 \mid d$ in {\it loc. cit}; indeed, this condition is used there only to guarantee $\rhobar_{E,p} \simeq \rhobar_{E_{1,-1},p}$, which we established independently of~$d$.)
\end{proof}
To complete the proof of Theorem~\ref{thm:1313n} we will now work with the Frey curve $F = F_{a,b} / K$ as defined in~\cite[p. 8669]{BCDF}. The results in {\it loc. cit} regarding~$F_{a,b}$ are stated for general~$d$ (in particular, independently of $3 \mid d$) and apply in our setting directly.

Assume $13 \mid a+b$. From Theorem~\ref{thm:overQ13} we have $4 \mid a+b$.

From lemmas~8, 9, 10,~11 and Theorem~8 of~\cite{BCDF} it follows that $\rhobar_{F,p}$ is irreducible and
\begin{equation*}
  \rhobar_{F,p} \simeq \rhobar_{f,\fp},
  \label{E:Fiso}
\end{equation*}
where $f$ is a Hilbert newform over $K$ of parallel weight $2$, trivial character, level~$2 \fq_{13}$, and $\fp$ a prime above~$p$ in the field of coefficients of~$f$. We compute this space using {\tt Magma}~\cite{Magma}.

There are four newforms, say $f_1, f_2, f_3$ and $f_4$, in the space. The forms~$f_1$, $f_2$ have rational coefficients and the forms $f_3$, $f_4$ have cubic coefficients fields. Furthermore, the form~$f_3$ is the form denoted by~$\ff_{11}$ in~\cite[\S 8]{BCDDF} and its field of coefficients $\Q_{f_3}$ is the maximal totally real subfield of~$\Q(\zeta_7)$.
Let $\fp_7$ denote the unique prime in $\Q_{f_3}$ above~$7$.

Using {\tt Magma} and standard trace comparisons at the auxiliary primes $q = 5,7,11$ eliminates the four newforms for all exponents~$p$ except for $f_1$ and $f_3$ when $p=7$. In other words, it could still be possible that
$\rhobar_{F,7} \simeq \rhobar_{f_1,7}$ or $\rhobar_{F,7} \simeq \rhobar_{\ff_{11},\fp_7}$.
We claim that both $\rhobar_{f_1,7}$ and $\rhobar_{\ff_{11},\fp_7}$ are reducible. Therefore the previous isomorphisms cannot happen because $\rhobar_{F,7}$ is irreducible.

We now prove the claim. Let $W$ be the base change to~$K$ of the elliptic curve with Cremona label $26b1$. The conductor of $W$ is $2\fq_{13}$ and~$W$ is modular because~$K$ is cubic and totally real~\cite{modularityCubic}. Thus either~$f_1$ or~$f_2$ correspond to $W$ via modularity and comparing the trace of Frobenius at $3\calO_K$ shows that
$f_1$ corresponds to~$W$. This curve has a $7$-torsion point over~$K$ (in fact over~$\Q$) therefore $\rhobar_{f_1,7}$ is reducible. Finally, the representation $\rhobar_{f_3,\fp_7}$ is also reducible by~\cite[Proposition~8.3]{BCDDF}, establishing the claim.

\begin{remark} We note that we actually have $\rhobar_{f_1,7} \simeq \rhobar_{f_3,\fp_7}$ allowing for a variation of the proof of the claim. Indeed, the previous isomorphism follows from an application of the socle method in \cite[\S 6]{BCDDF}. Therefore, the claim follows if we show that any of the two representations is reducible, which we can do by using either of the arguments in the proof.
\end{remark}

\section{Reduction to hyperelliptic curves}\label{sec:HE-curves}

Fix an odd prime $p$. In this section we will reduce the resolution of our main equation of interest~\eqref{GFE:13-13-p} to determining points on finitely many hyperelliptic curves of genus $(p-1)/2$ over the cubic number field $K$.

The polynomial $x^{13}+y^{13}\in \Z[x,y]$ factorizes over $\Q$ as
\[  x^{13} + y^{13} = (x+y)\phi_{13}\]
where~$\phi_{13} =  \frac{x^{13} + y^{13}}{x + y} \in \Z[x,y]$ is the two variable 13-th cyclotomic polynomial.
Over $K$ we get a further factorization of the form
\[ x^{13} + y^{13} = F \cdot \sigma(F) \cdot \sigma^2(F) \cdot (x+y)\]
where $F \in \calO_{K}[x,y]$ is homogeneous of degree 4 and $\sigma$ is a generator for $\Gal(K/\Q)$.
Explicitly, we will choose
\[F := x^4 + \rr x^3 y + (\rr^2 + \rr - 1) x^2 y^2 + \rr x y^3 + y^4.\]
As this binary form is symmetric, one readily finds convenient identities for it. Namely, define binary forms
\[G:=x+y, \quad H:=x^2 + \frac{1}{5}(-2 \rr^2 + 8) x y + y^2\]
and constant
\[d:=\frac{1}{4 \rr^2}=\left( \frac{\rr^2  + \rr -4}{2} \right)^2, \quad \text{noting } 1+d=(\rr^2-\rr+1) \left(\frac{\rr^2+\rr-5}{2}\right)^2.\]
Then we have the identity
\begin{equation}\label{eqn:FGH}
(1+d) H^2 = F+d G^4.
\end{equation}

Now let $(a,b,c) \in \Z^3$ be a solution to~\eqref{GFE:13-13-p}. Since $a, b$ are coprime, we have (see e.g.~\cite[Lemma 2.2]{DahmenSiksek}) the elementary properties that
\[ \gcd(a+b,\phi_{13}(a,b))  \in \{1,13\}\]
and
\begin{equation}\label{eqn:13-adic_info_Zfactors}
\gcd(a+b,\phi_{13}(a,b))=13 \Leftrightarrow 13 \mid c \Leftrightarrow 13 \mid a+b \Leftrightarrow 13 \mid \phi_{13}(a,b) \Leftrightarrow 13\Vert \phi_{13}(a,b).
\end{equation}

If the solution satisfies $13\nmid a+b$, then by classical descent, we have 
\[F(a,b)=e z_1^p \text{ and } G(a,b)=z_2^p\]
for certain $e, z_1, z_2 \in \calO_{K}$ with $e$ a unit (and actually $z_2 \in \Z$ nonzero).
Writing
\[Y':=\frac{H(a,b)}{z_2^{2p}}, \qquad X':=\frac{z_1}{z_2^4},\]
we see that specializing~\eqref{eqn:FGH} at $(a,b)$ and dividing by $z_2^{4p}$, we arrive at
\[C'_{p,e}: \quad (1+d)Y'^2=e X'^p+d.\]
This defines a hyperelliptic curve $C_{p,e}'$ of genus $(p-1)/2$.
Via rescaling $(X',Y') \mapsto (X,Y):=(X_0 X', Y_0 Y')$ where
\begin{equation}\label{eqn:X0Y0}
X_0:=4 (\rr^2-\rr+1), \quad Y_0:=2^{p-1} (\rr^2-\rr+1)^{(p+1)/2} (\rr^2+\rr-5),
\end{equation}
it is isomorphic to the curve given by the $\calO_{K}$-integral model
\begin{equation}\label{eqn:Cpe}
C_{p,e}: \quad Y^2=e X^p+4^{p-1} (\rr^2-\rr+1)^p \rr^{-2}.
\end{equation}
For later reference, we note the relation
\begin{equation}\label{eqn:GFE-sols-from-points}
\frac{(X')^p}{(Y')^2}=\frac{(X/X_0)^p}{(Y/Y_0)^2}=\frac{F(a,b)}{eH(a,b)^2}.
\end{equation}

Determining $C_{p,e}(K)$ for all units $e \in  \calO_{K}^*$ up to $p$-th powers solves~\eqref{GFE:13-13-p} for the case $13\nmid a+b$.
Note (for any $p, e$) that $C_{p,e}(K)$ is never empty, as it contains the point at infinity, denoted~$\infty$.

The case $e=1$ is of special interest to us, and we write $C_p:=C_{p,1}$ and $C'_p:=C'_{p,1}$.
We see that also $(1,\pm 1)$ is contained in $C'_p(K)$.
So that 
\begin{equation}\label{eqn:3-Cp-points}
\{ (X_0,\pm Y_0), \infty \} \subset C_p(K).
\end{equation}
We note that the  trivial solutions $(a,b,c)$ with $ab=0$ (i.e. $(\pm1, 0, \pm 1)$ and $(0, \pm 1, \pm 1)$) give rise to the point $(X_0,Y_0)$.
Conversely, any potential solution $(a,b,c)$ to~\eqref{GFE:13-13-p} that gives rise to one of the three rational points in~\eqref{eqn:3-Cp-points}, can readily be checked using~\eqref{eqn:GFE-sols-from-points} (with the LHS being $1+d$ for the point at infinity), to be a trivial solution with $ab=0$.

Now, the possible units $e$ can be restricted by unit sieves, as explained in Section~\ref{sec:descent-unit} below.
A \lq surviving\rq\ unit $\epsilon$ by the sieve gives rise to a unit $e:=\Norm_{L/K}(\epsilon)$ here.
We employ a first sieve, introduced in Section~\ref{subsec:non-modular-sieve}, for all primes $5 \leq p \leq 47$, $p\not=13$.
It turns out that for every such prime $p$, it remains to consider only two units up to $p$-th powers.
These include $e=1$ due to trivial solutions, and another \lq extraneous\rq\ unit; see~\eqref{eqn:norm_ext_unit} for the latter.
Explicitly, for $p=5$, it suffices to consider the two units:
\[ e \in \left\{ 1, \left( \rr (1-\rr) \right)^{-1} \right\}.\]
And for $p=7$, it suffices to consider the two units:
\[ e \in \left\{ 1, \left( \rr (1-\rr) \right)^3 \right\}.\]
A further unit sieve, given in Section~\ref{subsec:modular-sieve}, will eliminate the extraneous unit for $p=5,7$ as well as for several larger primes $p$.
We will discuss $K$-rational points on $C_p$ (i.e. the $e=1$ case) for $p=5,7,11$ in Section~\ref{sec:Cp-points}. There we will completely determine $C_p(K)$ for $p=5$ and (assuming GRH) for $p=7$, which then finished the proof of Theorem~\ref{thm:13-13-5or7}.


\section{The unit sieve}\label{sec:descent-unit}

Throughout this section, let $p$ denote a rational prime with $p\not=2,3, 13$.
The arguments in this section are inspired by the sieves in~\cite[\S 4]{DahmenSiksek} and~\cite[\S 7]{BCDDF}. The key difference is that here we will work locally at $p$, which is one of the exponents in~\eqref{GFE:13-13-p}, whilst in {\it loc. cit.} all primes used in the sieve are different from the exponents. This allows us to work modulo $p^2$, resulting in a highly effective sieve.

\subsection{Factorization and extraneous unit}

Suppose $(a,b,c)$ is a solution to~\eqref{GFE:13-13-p}. We have the factorization in $\calO_L$,
\begin{equation}\label{eq:factorization}
  a^{13} + b^{13} = (a+b)\phi_{13}(a,b)= (a+b) \prod_{k=1}^{12} (a + \zeta^k b) = c^p.
\end{equation}

We recall from~\cite[\S 2.1]{Frrp} several elementary facts regarding~\eqref{eq:factorization}.
Let~$\fp_{13}$ be the unique prime in~$L$ above~$13$ and denote by $\vv_{\fp_{13}}$
its associated valuation satisfying $\vv_{\fp_{13}}(13) = 12$.
Since $a,b$ are coprime,
the integers \(a + b\) and~$\phi_{13}(a,b)$ are coprime away from~\(13\) and if $13 \mid a+b$ then $13 \Vert \phi_{13}(a,b)$. Furthermore,
the factors $a + \zeta^k b$ for $1 \leq k \leq 12$ are pairwise coprime away from~$\fp_{13}$, and satisfy $\vv_{\fp_{13}}(a + \zeta^k b) = 1$ when $13 \mid a + b$.
Moreover, all primes $\ell \neq 13$ dividing $\phi_{13}(a,b)$ satisfy $\ell \equiv 1 \pmod{13}$.

Therefore, from~\eqref{eq:factorization} and classical descent, we have
\begin{equation}
\label{unit-equation}
  a + \zeta b = \begin{cases}
    \epsilon \gamma^p  & \text{ if } 13 \nmid a+b \\
    \epsilon (1 - \zeta) \gamma^p & \text{ if } 13 \mid a+b,
   \end{cases}
\end{equation}
for some $\epsilon \in \calO_L^*$ and~$\gamma \in \calO_L$; and also (recalling $p\not=2$),
\begin{equation}
\label{pair-condition}
  a + b = \begin{cases}
    \delta^p  & \text{ if } 13 \nmid a+b \\
    13^{pj-1}   \delta^p & \text{ if } 13 \mid a+b,
   \end{cases}
\end{equation}
for some~$\delta \in \Z$ and~$j \geq 1$.

Moreover, if $p \mid \phi_{13}(a,b)$ (and hence $p \mid a^{13} + b^{13}$) then $p \equiv 1 \pmod{13}$.
From now on assume, next to $p\not= 2,3, 13$, that $p\not\equiv 1 \pmod{13}$.
Then $p \nmid \phi_{13}(a,b)$, and consequently $\fp \nmid a+\zeta^k b$ for $1 \leq k \leq 12$ and all primes $\fp \mid p$ in~$L$.
Let $p\calO_L = \fp_1 \cdot \ldots \cdot \fp_s$ be the prime factorization of the prime $p$ in $L$.  We can reduce~\eqref{unit-equation} modulo~$\fp_i^2$, and from $\fp_i \nmid a+\zeta b$ it follows that $a+\zeta b \pmod{\fp_i^2}$ is invertible in $\calO_L/\fp_i^2$. Since the order of the unit group of
$\calO_L/\fp_i^2$ is divisible by~$p$, the condition of being a $p$-th power mod~$\fp_i^2$
is nontrivial (note that working only mod~$\fp_i$ would give a trivial condition).

By inspection, we easily spot the following solutions to~\eqref{unit-equation} with $\gamma = \pm 1$:
\begin{itemize}
\item[(i)] $(a,b) = \pm(1,0)$ with $\epsilon = 1$;
\item[(ii)] $(a,b) = \pm (0,1)$ with $\epsilon = \zeta$;
\item[(iii)] $(a,b) = \pm (1,1)$ with $\epsilon = 1+\zeta$;
\item[(iv)] $(a,b) = \pm (1,-1)$ with $\epsilon = 1$.
\end{itemize}
Note that (i), (ii), and (iii) correspond to the case $13 \nmid a+b$, and (iv) corresponds to the case $13 \mid a+b$.

Of course, replacing $(\epsilon, \gamma)$ by $(-\epsilon, -\gamma)$ in each case above will also give solutions.  
But clearly, in~\eqref{unit-equation} we only need to consider~$\epsilon$ up to $p$-th powers.
We have $pk \equiv 1 \pmod{13}$ for some $k\in \Z$ (since $p\not=13$), which implies $\zeta = (\zeta^k)^p$.
So we see that the unit~$\epsilon$ in solutions~(i) and~(ii) are the same up to $p$-th powers; also $\epsilon$ and~$-\epsilon$ are the same modulo $p$-th powers.

In the case that $13 \mid a+b$, solution~(iv) of course also satisfies~\eqref{pair-condition} (with $\delta=0$).
There seems no a priori reason, for general $p$, to expect any other unit than $\epsilon=1$ (up to $p$-th powers) from solution~(iv) to survive the sieve modulo $p^2$ below.

Let us turn to the case $13 \nmid a+b$.
When $2$ is not a $p$-th power mod~$p^2$, the solution~(iii) does not satisfy~\eqref{pair-condition}, so we do not expect the corresponding unit $\epsilon = 1+\zeta$ to survive the sieve modulo $p^2$ below in general.
However (up to $p$-th powers, as usual), next to the unit $\epsilon=1$,
there turns out to be a less obvious solution to~\eqref{unit-equation} which will give rise to a  unit~$\epsilon_0$ that will survive the sieve in general (in the case $13 \nmid a+b$ under consideration).

Indeed, to illustrate this for $p=5$, we have
\begin{equation}
 \label{E:epsilon_0}
 13^2-\zeta 13^2= \epsilon_0 \gamma_0^5, \quad \text{ where } \quad \gamma_0:=(1-\zeta)^5, \quad \epsilon_0:=13^2/(1-\zeta)^{24}.
\end{equation}
Although the pair $(a,b) = (13^2,-13^2)$ does not satisfy $13 \nmid a+b$, this is not detectable when working modulo~25.
This shows that for $a\equiv 13^2 \pmod{5^2}$ and $b \equiv -13^2 \pmod{5^2}$, the modulo~25 sieve can never discard the unit $\epsilon_0$; trying to sieve at additional primes will also not eliminate this unit for the same reasons.

In the general case of exponent $p\geq 5$, considering still the case of solutions with $13 \nmid a+b$, we have that apart from the unit $1$ there is always at least one other extraneous unit surviving the sieve as follows (all up to $p$-th powers of course).
Note that we have the unit $\mu_0:=13/(1-\zeta)^{12} \in \calO_L^*$, and hence for every $k \in \Z$ the useful identity
\begin{equation}\label{eqn:creating_extraneous_unit}
    \frac{\mu_0^k}{1-\zeta}=\frac{13^k}{(1-\zeta)^{12k+1}}.
\end{equation}
For any $k \in \Z$ such that 
\begin{equation}\label{eqn:kmodp}
12k\equiv -1 \pmod{p},
\end{equation}
which has a unique solution modulo $p$,
we get of course that
\[\gamma_0:=(1-\zeta)^{(12k+1)/p} \in \calO_L.\]
Now the extraneous unit
\begin{equation}\label{eqn:extraneous_unit}
    \mu:=\mu_0^k=\frac{13^k}{(1-\zeta)^{12 k}} \qquad \left(\text{where}\ 12k\equiv -1 \pmod{p}\right)
\end{equation}
survives the unit sieve, since from~\eqref{eqn:creating_extraneous_unit} we see that
\[13^k-\zeta 13^k=\mu\gamma_0^p.\]
For later reference, we note that
\begin{equation}\label{eqn:mu8th-power}
\Norm_{L/K}(\mu_0) = \Norm_{L/K}\left(\frac{13}{(1-\zeta)^{12}}\right)=\left( \frac{13}{\Norm_{L/K}(1-\zeta)^3} \right)^4=\left( \rr (1-\rr) \right)^{-8}.
\end{equation}
For $j \in \Z$ with $j\equiv -8k \pmod{p}$, the condition~\eqref{eqn:kmodp} is equivalent to $3j\equiv 2 \pmod{p}$.
For the norm (from $L$ to $K$) of the extraneous unit $\mu=\mu_0^k$, up to $p$-th powers, we therefore get
\begin{equation}\label{eqn:norm_ext_unit}
\Norm_{L/K}(\mu) =\left( \rr (1-\rr) \right)^j \pmod{\calO_{K}^{*p}}, \qquad 3j \equiv 2 \pmod{p}.
\end{equation}
In particular, for $p=5$ we can take $j=-1$, and for $p=7$ we can take $j=3$.

\subsection{The sieve without modular information}\label{subsec:non-modular-sieve}
We recall that $p\not= 2,13$ and $p \not\equiv 1 \pmod{13}$.
Let $p\calO_L = \fp_1 \cdot \ldots \cdot \fp_s$ be the factorization of~$p$ in~$L$.

{\bf Case (I).} We first apply the sieve in the case $13 \nmid a + b$.

Let $\mathcal{P}$ be the set of pairs $(\alpha,\beta)$ where $0 \leq \alpha \leq \beta \leq p^2-1$ such that at most one of~$\alpha,\beta$ is a multiple of~$p$, and $\alpha+\beta$ is a $p$-th power in $\Z/p^2\Z$; we assumed $\alpha \leq \beta$ by the symmetry in $x,y$ of~\eqref{GFE:13-13-p}.

Let $M_i := (\calO_L/\fp_i^2)^*$ and
define $\epsilon_i:=\epsilon_i(\alpha,\beta) = \alpha+\beta\zeta \pmod{\fp_i^2}$ for $(\alpha,\beta) \in \mathcal{P}$. 
Since $\alpha,\beta$ are not both divisible by $p$ and $p \not\equiv 1 \pmod{13}$, the properties of $\phi_{13}$ mentioned in the paragraph following~\eqref{eq:factorization} guarantee that $\epsilon_i \in M_i$. 
Let $\bar{\epsilon}_i$ denote the canonical image of $\epsilon_i$ in $M_i / M_i^p$ and
consider
\begin{align*}
\phi \; : \; \mathcal{P} & \to \prod M_i / M_i^p \\
      (\alpha,\beta) & \mapsto (\bar{\epsilon}_1, \ldots, \bar{\epsilon}_s). \
\end{align*}
We also have the standard reduction maps $\pi_i : \calO_L^* \to M_i / M_i^p$, which give rise to a map

\begin{align*}
\pi \;  : \;  \calO_L^* / \calO_L^{*p} & \to \prod M_i / M_i^p \\
      [\epsilon] & \mapsto (\pi_1(\epsilon), \ldots , \pi_s(\epsilon)) \
\end{align*}
where $[\epsilon]:= \epsilon \ \operatorname{mod} \calO_L^{*p} \in \calO_L^* / \calO_L^{*p}$ and the map is well defined by the representative $\epsilon \in \calO_L^*$.
Note that for a solution $(a,b,c)$ to~\eqref{GFE:13-13-p} satisfying~\eqref{unit-equation} for some $\gamma,\epsilon$, there exists a pair $(\alpha, \beta) \in \mathcal{P}$ (congruent to $(a,b)$ or $(b,a)$ modulo $p^2$) such that
\[\phi(\alpha,\beta) = \pi([\epsilon]).\]

Using {\tt Magma}, we explicitly construct both maps~$\pi$ and $\phi$.
As $\mathcal{P}$ is rather smaller (e.g. for $p=5,7$ of orders $62, 171$ resp.) than $\calO_L^* / \calO_L^{*p}$ (e.g. for $p=5,7$ of orders $5^5=3125, 7^5=16807$ resp.), we do not compute the intersection $\pi(\calO_L^* / \calO_L^{*p}) \cap \phi(\mathcal{P})$.
Instead, we simply compute the inverse image of $\phi(\mathcal{P})$ under $\pi$.
In practice, we assert that $\# \ker(\pi) = 1$, and then for every $u \in \phi(\mathcal{P})$, we check whether there is a (necessarily unique) $\mathcal{E} \in \calO_L^* / \calO_L^{*p}$ such that $\pi(\mathcal{E})=u$, and if so, store a representative $\epsilon$ for the class $\mathcal{E}=[\epsilon]$.
The union of those $\epsilon$ is the output of the sieve.

We ran the sieve for all primes $p$ in the range $5 \leq p \leq 43$, $p\not=13$.
For $p=5$, after running the sieve, we have (up to $5$-th powers) two surviving units:
\[
 \epsilon = 1 \qquad \text{ and } \qquad
 \epsilon = -2\zeta^{11} - 4\zeta^{10} - 3\zeta^9 - 4\zeta^8 - 2\zeta^7 - 4\zeta^5 - 5\zeta^4 - \zeta^3 - \zeta^2 - 5\zeta - 4.
\]
The second unit is indeed equal to $\mu=13^2/(1-z)^{24}$ up to multiplication with a fifth power of a unit (the latter unit can be calculated to be $-2\zeta^{11} - 2\zeta^{10} + \zeta^9 - 3\zeta^8 - \zeta^7 - \zeta^6 - \zeta^5 - \zeta^4 - 3\zeta^3 + \zeta^2 - 2\zeta - 2$).

Similarly, for all other primes in our range, we also have as output for our sieve that, up to $p$-th powers, there are only 2 units surviving, namely $1$ and the extraneous unit.

{\bf Case (II).} We now apply the sieve in the case $13 \mid a + b$. There are only two  differences:

(a) Due to~\eqref{pair-condition}, the set $\mathcal{P}$ is the set of pairs $(\alpha,\beta)$ where $0 \leq \alpha \leq \beta \leq p^2-1$ such that at most one of~$\alpha,\beta$ is a multiple of~$p$, and $13(\alpha+\beta)$ is a $p$-th power in $\Z/p^2\Z$.

(b) Note that $(1-\zeta)$ is invertible mod~$\fp_i$.
The map $\phi$ is defined instead using the quantities
$$\epsilon_i:= (\alpha+\beta\zeta)(1-\zeta)^{-1} \pmod{\fp_i^2}.$$

As before, with the help of {\tt Magma} we apply the sieve to all primes $5 \leq p \leq 47$, $p \neq 13$.
In this case, for all primes in our range, the only unit surviving the sieve is $\epsilon = 1$ (up to $p$-th powers), except when $p=17$. For the latter prime, we get two units (including the trivial one of course) in~{\bf (II)}.
Presumably, the non-trivial unit can be eliminated by sieving at another prime.
However, in light of the resolution when $13 \mid a+b$ in Theorem~\ref{thm:1313n}, we will not pursue this here.

\subsection{The sieve with modular information}\label{subsec:modular-sieve}

We will now eliminate the extraneous unit~$\mu$ given in~\eqref{eqn:extraneous_unit} for a range of primes $p$, including $p=5,7$.
For this, we will combine modulo~$q$ information, for some auxiliary prime $q$, from the above sieve with that of the Frey curve~$E_{a,b}/\Q(\sqrt{13})$ from Section~\ref{sec:multiFrey}.

Let $p=5$ or~$7$. We start by extracting modulo~$q=19$ information from the sieve.
Since $p \nmid 19-1$, we let $\mathcal{P}$ be the set of pairs $(a,b) \neq (0,0)$ with $0 \leq a \leq b \leq 18$. Note that $19$ is inert in~L=$\Q(\zeta)$ and
let $M_1 := (\calO_L/19\calO_L)^*$.
For the mod $q=19$ situation, we consider the map
 \begin{align*}
\phi \; : \; \mathcal{P} & \to M_1 / M_1^p \\
      (a,b) & \mapsto \overline{a+b\zeta \pmod{19}}. \
\end{align*}
We also have, as before, the canonical map (for the newly defined $M_1$)
\[ \pi \;  : \;  \calO_L^* / \calO_L^{*p} \to M_1 / M_1^p.\] 
Using {\tt Magma}, we find that, for $p=5$ and $p=7$, the pairs in~$\mathcal{P}$ satisfying $\phi(a,b) = \pi([\mu])$ are 
\[
 \mathcal{L} = \left\{ (a,19-a) \mid  a \in \{1,2,\ldots,9\}  \right\}.
\]
Now let $(a,b,c)$ be a solution to~\eqref{GFE:13-13-p} with $p=5$ or $p=7$ satisfying $13 \nmid a+b$.
Assume that after descent it gives rise to the unit~$\epsilon=\mu$ in~\eqref{unit-equation}.
Thus $(a \mod 19,b \mod 19)$ (or the swapped pair) is in~$\mathcal{L}$.

We now consider the Frey curve $E_{a,b}/\Q(\sqrt{13})$ attached to~$(a,b,c)$. From Proposition~\ref{P:iso2} it follows that $\rhobar_{E_{a,b},p}$ is isomorphic to
$\rhobar_{E_{1,0},p}$, $\rhobar_{E_{1,1},p}$ or $\rhobar_{E_{1,-1},p}$. Since $13 \nmid a+b$, from the proof of Theorem~\ref{thm:overQ13}, we have $\rhobar_{E_{a,b},p} \not\simeq \rhobar_{E_{1,-1},p}$, hence
\[
 \rhobar_{E_{a,b},p} \simeq \rhobar_{E_{1,0},p} \qquad \text{ or } \qquad \rhobar_{E_{a,b},p} \simeq \rhobar_{E_{1,1},p}.
\]
Note that~$19$ is inert in~$\Q(\sqrt{13})$. Since $19 \not\equiv 1 \pmod{13}$ it follows from~\cite[Lemma~5]{BCDF} that~$E_{a,b}$ has good reduction at~$19$ and
by taking traces of Frobenius we obtain, respectively,
\begin{equation}
\label{eq:auxiliaryq}    
 a_{19}(E_{a,b}) \equiv -9 \pmod{p} \quad \text{ or } \quad a_{19}(E_{a,b}) \equiv 3 \pmod{p}.
\end{equation}

On the other hand, for all $(a,b) \in \mathcal{L}$, we compute that 
\[
a_{19}(E_{a,b}) \equiv 0 \pmod{5} \quad \text{ and } \quad 
a_{19}(E_{a,b}) \equiv 4 \pmod{7}, 
\]
yielding a contradiction to both congruences in~\eqref{eq:auxiliaryq} for both $p=5$ and $p=7$.
This eliminates the possibility of~$\mu$ occurring after descent for these primes, as desired.

For the prime exponents $11\leq p \leq 37$, $p\not=13$,  we apply a similar argument using an appropriate auxiliary prime~$q$.
More precisely, we take
\[
(p,q) \in \left\{(11,23), (17,103), (19,7), (23,139), (29,233), (31,37), (37,11) \right\}
\]
with the improvement that, when $p \mid q-1$, equation~\eqref{pair-condition} allows us to take $\mathcal{P}$ to be
the set of pairs $(a,b) \neq (0,0)$ with $0 \leq a \leq b \leq q-1$ and $a+b$ a $p$-th power in $\Z/q\Z$.
The upshot is that also in this prime range, the extraneous unit is eliminated.

\section{Rational points on $C_p$}\label{sec:Cp-points}

In this section, we will use variants of Chabauty's method to determine $C_p(K)$ for $p=5$ and $p=7$, where we assume GRH is the latter case.
Subsequently, this will complete the proof of Theorem~\ref{thm:13-13-5or7}.
We will also explore some partial results for $p=11$ (assuming GRH again).

In its basic form, the method of Chabauty (and Coleman) considers an embedding of a genus at least two curve $C/\Q$ inside an abelian variety $J$ (usually its Jacobian) $C \hookrightarrow J$ and studies the subsets of rational points inside these spaces.
In the case where the rank of $J(\Q)$ is less than the dimension of $J$, we can find nontrivial locally analytic $\ell$-adic functions vanishing on all of $J(\Q)$ and hence all of $C(\Q) \subseteq C(\Q_\ell) \cap J(\Q)$.
Pulling these functions back to the curve then results in an effective method to find the rational points (or at least a finite set of $\ell$-adic points containing them), when the rank condition is met.

Many variants of these methods have been introduced, which can in some cases apply when the original rank condition is not satisfied. These generally work by considering similar set-ups coming from different constructions starting from the initial curve.
They include elliptic curve Chabauty~\cite{Bruin2003}, quadratic Chabauty~\cite{BalakrishnanDogra2018}, Selmer group Chabauty~\cite{Stoll2017}, and, most importantly for us, number field Chabauty~\cite{Siksek2013}.

In this last variant, introduced by Siksek using ideas of Wetherell, we begin with a curve $C$ defined over a number field $F$, and rather than considering $C(F) \to J(F)$ we consider $\operatorname{Res}_{F/\Q} C (\Q) \to \operatorname{Res}_{F/\Q} J (\Q)$. This has the advantage of possibly working when $\operatorname{rank}(J(F)) \le [F:\Q](g - 1)$ (rather than $\operatorname{rank}(J(F)) \le g - 1$), although it may not always be successful.

With many Chabauty-like methods, the $p$-adic analysis suffices to give a bound on the number of rational points in each $p$-adic disk, though often this bound is not sufficient to determine the rational points exactly.
Thus, the combination of Chabauty with some form of Mordell--Weil sieve~\cite{BruinStoll2010} is what usually suffices to effectively determine the rational points on the curve.

Let $J_p:=\Jac(C_p)/K$ be the Jacobian of $C_p$.

\subsection{The case $p=5$}

Recall the definition of $X_0$ and $Y_0$ in~\eqref{eqn:X0Y0} and let
\[X_1:=4 \rr - 4, \quad Y_1:= 176 \rr^2 - 288 \rr + 96.\]
We will show that 
\begin{equation}
C_5(K) = \{ (X_0,\pm Y_0), (X_1,\pm Y_1), \infty \}.
\end{equation}

With {\tt Magma} we compute that $J_5$ has $2$-Selmer rank equal to $2$ and has trivial $2$-torsion.
Moreover, $[(X_0,Y_0)-\infty], [(X_1,Y_1) - \infty] \in J_5(K)$ provide $2$ independent points of infinite order.
So $\rank J_5(K)=2$ and we have explicit generators for a finite index subgroup.
This puts us in a position to apply Siksek's Chabauty over number fields~\cite{Siksek2013}.

Using the implementation due to Siksek (which needs only minor updates to work with recent versions of \Magma) we find that the set of primes above $p=47$ the combination of Chabauty and the Mordell--Weil sieve are enough to prove that there is only one $K$-rational point in each of the residue disks around $\{ (X_0,\pm Y_0), (X_1,\pm Y_1), \infty \}$, and that all other residue disks do not contain $K$-rational points.
This shows that $C_5(K)$ is as claimed.

Finally, we need to check that the two \lq extra\rq\ points $(X_1,\pm Y_1)$ do not correspond to solutions of~\eqref{GFE:13-13-p}.
By~\eqref{eqn:GFE-sols-from-points}, it suffices to show that the binary quartic form
\[F(x,y)-\frac{(X_1/X_0)^p}{(Y_1/Y_0)^2} H(x,y)^2\]
does not have a linear factor over $K$ (or even $\Q$ actually).
One readily checks that this is indeed the case by factorizing the form over $K$ using \Magma.
(Perhaps surprisingly, it factorizes into two irreducible quadratic pieces over $K$.)
This finalizes the proof of Theorem~\ref{thm:13-13-5or7} for $p=5$.

\subsection{The case $p=7$}

\emph{Assume GRH}. We will show that 
\begin{equation}\label{eqn:rat-points-7}
C_7(K) = \{ (X_0,\pm Y_0), \infty \}.
\end{equation}

With {\tt Magma}, we compute, using the GRH assumption for the underlying class group computations, that $J_7$ has $2$-Selmer rank equal to $1$ and has trivial $2$-torsion.
Moreover, $[(X_0,Y_0)-\infty] \in J_7(K)$ is a point of infinite order.
So $\rank J_7(K)=1$ and we have an explicit generator for a finite index subgroup.
This puts us in a position to apply \lq standard\rq\ Chabauty.
Performing this (e.g. using \Magma) shows~\ref{eqn:rat-points-7}, thereby finalizing the proof of Theorem~\ref{thm:13-13-5or7} for $p=7$.

\subsection{The case $p=11$}

\emph{Assume GRH} again.
For $p=11$, we compute that $J_{11}$ has $2$-Selmer rank equal to $3$ and has trivial $2$-torsion.
We were able to only find one independent point of infinite order on $J_{11}(K)$, namely $[(X_0,Y_0)-\infty]$.
So $1 \leq \rank J_{11}(K) \leq 3$.
As $C_{11}$ has $K$-rational points, and hence points everywhere locally, we find that if Sha
is finite, then its order is a square (see e.g. \cite[Corollaries 9 and 12]{PoonenStoll99}), and consequently $\rank J_{11}(K) \in \{1, 3\}$.
Perhaps Selmer group Chabauty~\cite{Stoll2017} can deal with this case.

\section{Alternative methods}\label{sec:alternative-methods}

In this section we discuss the necessity of Chabauty and modular methods. We focus mainly on $p=5$.

\subsection{Chabauty methods for other cases}\label{subsec:alternative-Chabauty}

Assume $(a,b,c) \in \Z^3$ is a solution to~\eqref{GFE:13-13-p} with $13 | c$ (i.e. $13|a+b$).
Similarly as in Section~\ref{sec:HE-curves}, we can reduce to finding rational points on hyperelliptic curves. Let the binary forms $F,G,H \in \calO_{K}[x,y]$ and the constant $d \in K$ be as defined in Section~\ref{sec:HE-curves}, and choose $\pi_{13}:=F(1,-1)=\rho^2+3\rho-2$ as generator for the unique prime ideal in $\calO_{K}$ lying above $13$.
A classical descent gives that
\[F(a,b) = \pi_{13} e z_1^p \text{ and } 13 G(a,b)=z_2^p\]
for certain $e, z_1, z_2 \in \calO_{K}$ with $e$ a unit (and actually $z_2 \in \Z$).

Writing $Y':=H(a,b)/z_2^{2p}$ and $X':=z_1/z_2^4$ as before, we see that specializing~\eqref{eqn:FGH} at $(a,b)$ and dividing by $z_2^{4p}$, we arrive at
\[D'_{p,e}: \quad (1+d)Y'^2=\pi_{13} e X'^p+\frac{d}{13^4}.\]
This defines a hyperelliptic curve $D_{p,e}'$ of genus $(p-1)/2$.
Its equation could be conveniently rescaled again of course.

The case $e=1$ is again of special interest to us, and we write $D'_p:=D'_{p,1}$.
We note that the trivial solutions $(a,b,c)$ with $c=0$ (i.e. $(\pm 1,\mp 1, 0)$) give rise to the point at infinity on $D'_p$.
Let $p\not=13$ be prime with $5\leq p \leq 47$. As the basic (non-modular) sieve eliminates all units $e$, except $e=1$ of course, we are left with determining $D'_p(K)$ in order to find the solutions $(a,b,c)$ with $13|c$. Let us focus on the difficulties for $p=5$.

Let $p=5$. By a $2$-Selmer group computation, the rank of the Jacobian of $D'_5/K$ equals $0$ or $1$. Assuming finiteness of Sha, we get that the rank equals $1$.
If we could find a point of infinite order on the Jacobian, we of course get that the rank equals $1$, and we could very likely apply (standard) Chabauty in practice to determine the $K$-rational points on the curve.
However, a further search did not reveal a point of infinite order on the Jacobian. So we are not in a position to employ \lq basic\rq\ Chabauty.
Perhaps Selmer group Chabauty over number fields could be employed.

For the extraneous unit $\mu$, by a $2$-Selmer group computation, the rank of the Jacobian of $C_{5,\mu}/K$ equals $0$ or $1$. We find ourselves in a similar situation as with $D'_5$.

\subsection{Modular methods for other cases}\label{subsec:alternative-modular-method}

In view of recent progress~\cite{BCDF2, ChenK} surrounding the Darmon program for the generalized Fermat equation~\cite{Darmon}, it is natural to wonder if we could replace the use of Chabauty methods in our proofs with an application of the multi-Frey modular method using higher dimensional Frey varieties. Recall that, in Theorem~\ref{thm:1313n}, we used the modular method with Frey curves to deal with the case $13 \mid a+b$ of equation~\eqref{GFE:13-13-5or7}; in particular, this deals with the trivial solution $\pm (1,-1,0)$. The Frey hyperelliptic curve $J = J_5^-(a,b,c)$ attached to Fermat equations of signature $(p,p,5)$ is an excellent candidate for the complementary case, because it becomes singular when evaluated at the trivial solutions $\pm(1,0,1)$ and $\pm(0,1,1)$ (see \cite[\S 5]{ChenK} for discriminant formula). However, a closer look at the proof of \cite[Theorem 1.2]{ChenK} reveals at least one serious obstruction, namely, when $2 \mid ab$ and $5 \mid ab$ we do not have irreducibility of the mod~$\fp$ representation~$\rhobar_{J,\fp}$. Unfortunately, our additional assumption $13 \nmid a+b$ does not help and so irreducibility can only be guaranteed conjecturally for large enough~$p$ via \cite[Conjecture 4.1]{Darmon}. Additionally, we observe that the Frey hyperelliptic curve for signature $(13,13,p)$ studied in~\cite{BCDF2} is non-singular when evaluated at $\pm(1,0,1)$ and $\pm(0,1,1)$, giving rise to obstructions in the elimination step; since this obstructing variety has CM, we only expect to complete the argument conjecturally for large~$p$, assuming the large image conjecture \cite[Conjecture 4.1]{Darmon}.

\bibliographystyle{plain}
\bibliography{GFE1313n}

\end{document}